\DeclarePairedDelimiter\abs{\lvert}{\rvert}
\newcommand{\matr}[1]{\mathbf{#1}}
\newtheorem{lemma}{Lemma}
\newtheorem{coro}{Corollary}[section]
\title{Saddle-free Hessian-free Optimization}
\author{
  Martin Arjovsky\\
  Courant Institute of Mathematical Sciences\\
  New York University\\
  \texttt{martinarjovsky@gmail.com} \\
  %% examples of more authors
  %% \And
  %% Coauthor \\
  %% Affiliation \\
  %% Address \\
  %% \texttt{email} \\
  %% \AND
  %% Coauthor \\
  %% Affiliation \\
  %% Address \\
  %% \texttt{email} \\
  %% \And
  %% Coauthor \\
  %% Affiliation \\
  %% Address \\
  %% \texttt{email} \\
  %% \And
  %% Coauthor \\
  %% Affiliation \\
  %% Address \\
  %% \texttt{email} \\
}
\begin{document}
% \nipsfinalcopy is no longer used

\maketitle

\begin{abstract}
  Nonconvex optimization problems such as the ones in 
  training deep neural networks suffer
  from a phenomenon called saddle point proliferation. This means that there
  are a vast number of high error saddle points present
  in the loss function. Second
  order methods have been tremendously successful and widely adopted in the
  convex optimization community, while their usefulness in deep learning
  remains limited. This is due to two problems: computational complexity and
  the methods being driven towards the high error saddle points. We introduce
  a novel algorithm specially designed to solve these two issues, providing a
  crucial first step to take the widely known advantages of Newton's
  method to the nonconvex optimization community, especially in high
  dimensional settings.
\end{abstract}

\section{Introduction}
  The loss functions arizing from learning deep neural networks are incredibly
  nonconvex, so the fact that they can be successfully optimized in a lot
  of problems remains a partial mistery.
  However, some recent work has started to shed light on this issue \citep{
  Auff-ea-SG, Chro-ea-LM, Dauphin-ea-SP}, leading to three likely conclusions:
\begin{itemize}
  \item There appears to be an exponential number of local minima.
  \item However, all local minima lie within a small range of error
    with overwhelming proability. Almost all local minima
    will therefore have similar error to the global minimum.
  \item There are exponentially more saddle points than minima, a phenomenon
    called saddle point proliferation.
\end{itemize}

These consequences point to the fact that the low dimensional picture of getting
"stuck" in a high error local minima is mistaken, and that finding a local
minimum is actually a good thing. However, Newton's method (the core component
of all second order methods), is biased towards finding a critical point,
\textit{any} critical point. In the presence of an overwhelming number of
saddle points, it is likely that it will get stuck in one of them 
instead of going to a minimum. 

Let $f$ be our loss function, $\nabla f$ and $\matr{H}$ be it's gradient and 
Hessian respectively, and $\alpha$ our learning rate. 
The step taken by an algorithm at iteration $k$ is denoted by 
$\Delta \theta_k$. The property of Newton being driven towards a close critical
point can easily be seen by noting that its update equation
\begin{equation} \label{eq::Newton}
  \Delta \theta_k = - \alpha \matr{H}(\theta_k)^{-1} \nabla f(\theta_k)
\end{equation}
comes from taking a second order approximation of our loss function, and solving
for the closest critical point of this approximation 
(i.e. setting its gradient to 0).

To overcome this problem of Newton's method, \cite{Dauphin-ea-SP} proposes
a different algorithm, called saddle-free Newton, or SFN. 
The update equation for SFN is defined as
\begin{equation} \label{eq::SFN}
  \Delta \theta_k = - \alpha \abs{\matr{H}(\theta_k)}^{-1} \nabla f(\theta_k)
\end{equation}
The absolute value notation in equation \ref{eq::SFN} means that
$\abs{\matr{A}}$ is obtained by replacing the eigenvalues of $\matr{A}$ with 
their absolute values. In the convex case, this changes nothing from Newton.
However, in the
nonconvex case, this allows one to keep the very smart rescaling of Newton's
method, but still going on a descent direction when the eigenvalues are 
negative.

While saddle-free Newton showed great promise, its main problem is the
computational complexity it carries. 
Let $m$ be the number of parameters, or more
generally, the dimension of $f$'s domain.
The cost of calculating the update in equation \ref{eq::SFN} is the cost
of diagonalizing (and then inverting) the matrix $\abs{\matr{H}}$, 
namely $\mathcal{O}(m^3)$. Furthermore, this has a memory cost of 
$\mathcal{O}(m^2)$, because it needs to store the full Hessian. 
Since in neural network problems $m$ is typically bigger than $10^6$, 
both costs are prohibitive, which is the reason \cite{Dauphin-ea-SP} 
employs a low-rank approximation. Using a rank $k$
approximation, the algorithm has $\mathcal{O}(k^2m)$ time cost 
and $\mathcal{O}(km)$ memory cost. While this is clearly cheaper than the full
method, it's still intractable for current problems, since in order to get
a useful approximation the $k$ required becomes prohibitively large, especially
for the memory cost.

Another line of work is the one followed by Hessian-free optimization
\citep{Martens-HF}, popularly known as HF. This method centers in three core
ideas:
\begin{itemize}
  \item The Gauss-Newton method, which consists in replacing the use of the 
    Hessian in Newton's algorithm \eqref{eq::Newton}
    for the Gauss-Newton matrix $\matr{G}$.
    This matrix is a positive definite approximation of the Hessian, and it
    has achieved a good level of applicability in convex problems.
    
    However, the behaviour when the loss is
    nonconvex is not well understood. Furthermore, \cite{Mizutani-ea-NC}
    argues against using the Gauss-Newton matrix on neural networks, showing
    it suffers from poor conditioning and drops the negative curvature
    information, which is argued to be crucial. Note that this is a major
    difference with SFN, which leverages the negative curvature information,
    keeping the scaling in these directions.
  \item  Using conjugate gradients (CG) to solve the system 
    $\matr{G}(\theta_k)^{-1}\nabla f(\theta_k)$.
    One key advantage of CG is that it doesn't require to store
    $\matr{G}(\theta_k)$, only to calculate matrix-vector products of the form 
    $\matr{G}(\theta_k)v$ 
    for any vector $v$. The other advantage of this method is that
    it's iterative, allowing for early stopping when the solution to the system
    is good enough.
  \item  When using neural networks, 
    the $\mathcal{R}$-operator \citep{Pearlmutter-R, Schraudolph-FC} 
    is an algorithm to calculate matrix-vector products of the form
    $\matr{H}v$ and $\matr{G}v$ in $\mathcal{O}(m)$ time without storing
    any matrix. This is obviously very efficient,
    since normally multiplying an $m$-by-$m$ matrix with a vector has
    $\mathcal{O}(m^2)$ time and memory cost.
\end{itemize}

While Hessian-free optimization is computationally efficient, the use of 
the Gauss-Newton matrix in nonconvex objectives is thought to be inneffective.
The update equation of saddle-free Newton is specially designed for this kind
of problems, but current implementations lack computational efficiency.

In the following section, we propose a new algorithm that takes the advantages
of both approaches. This renders a novel second order method that's
computationally efficient, and specially designed for nonconvex optimization
problems.

\section{Saddle-free Hessian-free Optimization}

Something that comes to mind is the possibility of using 
conjugate gradients
to solve the system $\abs{\matr{H}}^{-1} \nabla f$ appearing in
equation \eqref{eq::SFN}. This would allow us to have an iterative method,
and possibly do early stopping when the solution to the system is good enough.
However, in order to do that we would need to calculate $\abs{\matr{H}}v$ for
any vector $v$. While this was easy with $\matr{H}v$ and $\matr{G}v$ via the
$\mathcal{R}$-operator, it doesn't extend to calculating
$\abs{\matr{H}}v$, so we arrive at an impass.

The first step towards our new method comes from the following simple but
important observation, that we state as a Lemma.
\begin{lemma}
 Let $\matr{H}$ be a real symmetric $m$-by-$m$ matrix. 
  Then, $\abs{\matr{H}}^2 = \matr{H}^2$.
\end{lemma}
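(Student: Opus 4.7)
The plan is to invoke the spectral theorem for real symmetric matrices and reduce the identity to the scalar fact that $|\lambda|^2 = \lambda^2$ for every real number $\lambda$.

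First I would write $\matr{H} = \matr{Q}\matr{D}\matr{Q}^T$, where $\matr{Q}$ is orthogonal and $\matr{D}$ is the diagonal matrix of real eigenvalues $\lambda_1, \ldots, \lambda_m$ of $\matr{H}$. By the definition of $\abs{\matr{H}}$ given in the paper, applying absolute values to the eigenvalues amounts to $\abs{\matr{H}} = \matr{Q}\abs{\matr{D}}\matr{Q}^T$, where $\abs{\matr{D}}$ is diagonal with entries $\abs{\lambda_i}$.

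Next I would compute both sides explicitly. Using $\matr{Q}^T\matr{Q} = \matr{I}$, the orthogonal factors telescope and give
\[
  \abs{\matr{H}}^2 = \matr{Q}\abs{\matr{D}}\matr{Q}^T\matr{Q}\abs{\matr{D}}\matr{Q}^T = \matr{Q}\abs{\matr{D}}^2\matr{Q}^T,
  \qquad
  \matr{H}^2 = \matr{Q}\matr{D}^2\matr{Q}^T.
\]
So the claim reduces to showing $\abs{\matr{D}}^2 = \matr{D}^2$. Since both matrices are diagonal, this is the entrywise identity $\abs{\lambda_i}^2 = \lambda_i^2$, which holds for any real $\lambda_i$. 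This concludes the proof.

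There is no real obstacle here: the only thing worth being careful about is making sure the definition of $\abs{\matr{H}}$ is understood as acting on the spectrum in the orthonormal eigenbasis of $\matr{H}$, so that the passage from $\matr{H} = \matr{Q}\matr{D}\matr{Q}^T$ to $\abs{\matr{H}} = \matr{Q}\abs{\matr{D}}\matr{Q}^T$ is immediate rather than requiring a separate argument. Once that is in place, everything else is a one-line computation.
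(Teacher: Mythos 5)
Your proof is correct and follows essentially the same route as the paper: diagonalize $\matr{H}$ via the spectral theorem, note that $\abs{\matr{H}}$ acts by taking absolute values on the diagonal factor, and reduce the identity to $\abs{\lambda_i}^2 = \lambda_i^2$ entrywise. The only cosmetic difference is that the paper writes the conjugation with $\matr{U}^{-1}$ and spells out the diagonal case as a separate first step, while you fold it into the final telescoping computation.
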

\begin{proof}
  First we prove this for a real diagonal matrix $\matr{D}$. We denote
  $\matr{D}_{i,i} = \lambda_i$. By definition,
  we have that $\abs{\matr{D}}_{i,i} = \abs{\lambda_i}$ and it vanishes on the
  off-diagonal entries. Therefore, it is trivially verified that
  $(\abs{\matr{D}}^2)_{i,i} = \abs{\lambda_i}^2 = \lambda_i^2 = 
  \left(\matr{D}^2\right)_{i,i}$
  and both matrices are diagonal, which makes them coincide.

  Let $\matr{H}$ be a real symmetric $m$-by-$m$ matrix. 
  By the spectral
  theorem, there is a real diagonal matrix $\matr{D}$ and an orthogonal matrix
  $\matr{U}$ such that $\matr{H} = \matr{U} \matr{D} \matr{U}^{-1}$. Therefore,
  \begin{align*}
    \abs{\matr{H}}^2 &= \left(\matr{U} \abs{\matr{D}} \matr{U}^{-1} \right)^2
      = \matr{U} \abs{\matr{D}} \matr{U}^{-1}\matr{U} \abs{\matr{D}}  
        \matr{U}^{-1} \\
      &= \matr{U} \abs{\matr{D}}^2 \matr{U}^{-1}
      = \matr{U} \matr{D}^2 \matr{U}^{-1} \\
      &= \matr{U} \matr{D} \matr{U}^{-1}\matr{U} \matr{D} \matr{U}^{-1} 
      = \left( \matr{U} \matr{D} \matr{U}^{-1} \right)^2 \\
      &= \matr{H}^2
  \end{align*}
\end{proof}
Let $\matr{A}$ be a (semi-)positive definite matrix. Recalling that the square
root of $\matr{A}$ (noted as $\matr{A}^{\frac{1}{2}}$) is defined as the only
(semi-)positive definite matrix $\matr{B}$ such that $\matr{B}^2 = \matr{A}$,
we have the following corollary.
\begin{coro}
  Let $\matr{H}$ be a real square matrix. Then, $\abs{\matr{H}}$ is the square
  root of $\matr{H}^2$. Namely, $\abs{\matr{H}} = \left(\matr{H}^2\right)^
  \frac{1}{2}$.
\end{coro}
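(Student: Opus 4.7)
The plan is to apply Lemma~1 together with the uniqueness clause in the definition of the matrix square root. Since the square root of a positive semidefinite matrix $\matr{A}$ is characterized as the \emph{unique} positive semidefinite matrix $\matr{B}$ with $\matr{B}^2 = \matr{A}$, I only need to verify three things: (i) $\matr{H}^2$ is positive semidefinite, (ii) $\abs{\matr{H}}$ is positive semidefinite, and (iii) $\abs{\matr{H}}^2 = \matr{H}^2$.

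First I would note that the statement implicitly requires $\matr{H}$ to be real symmetric (as in Lemma~1), since otherwise the expression $\abs{\matr{H}}$ has not been defined. Assuming this, step (iii) is immediate from Lemma~1. For step (i), I would use that for symmetric $\matr{H}$ we have $v^\top \matr{H}^2 v = v^\top \matr{H}^\top \matr{H} v = \norm{\matr{H} v}^2 \geq 0$, so $\matr{H}^2$ is positive semidefinite (and it is symmetric since $(\matr{H}^2)^\top = (\matr{H}^\top)^2 = \matr{H}^2$).

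For step (ii), I would invoke the spectral decomposition $\matr{H} = \matr{U} \matr{D} \matr{U}^{-1}$ used in the proof of Lemma~1. Then $\abs{\matr{H}} = \matr{U} \abs{\matr{D}} \matr{U}^{-1}$ is symmetric (because $\matr{U}$ is orthogonal and $\abs{\matr{D}}$ is diagonal hence symmetric), and its eigenvalues are $\abs{\lambda_i} \geq 0$, giving positive semidefiniteness.

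Combining (i), (ii) and (iii), $\abs{\matr{H}}$ is a positive semidefinite matrix whose square equals $\matr{H}^2$, so by the uniqueness of the positive semidefinite square root it must coincide with $(\matr{H}^2)^{1/2}$. The main obstacle is essentially bookkeeping: making sure the hypotheses of the uniqueness statement are met, in particular verifying symmetry and nonnegativity of eigenvalues of $\abs{\matr{H}}$ rather than just the algebraic identity from Lemma~1, since the square root is only unique within the positive semidefinite class.
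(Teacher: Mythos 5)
Your proposal is correct and follows exactly the route the paper intends: Lemma~1 gives $\abs{\matr{H}}^2 = \matr{H}^2$, and the uniqueness clause in the definition of the positive semidefinite square root (together with the observation that $\abs{\matr{H}}$ is symmetric with nonnegative eigenvalues) yields the corollary. Your remark that the hypothesis should read ``real symmetric'' rather than merely ``real square'' is also apt, since $\abs{\matr{H}}$ is only defined via the spectral decomposition.
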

Note that our main impass is not knowing how to calculate $\abs{\matr{H}}v$
for any vector $v$. However, we know how to calculate $\matr{H}^2v = \matr{H}(
\matr{H}v)$ by applying the $\mathcal{R}$-operator twice. Therefore, the
problem can be reformulated as: given a positive definite matrix $\matr{A}$, of
which we know how to calculate $\matr{A}u$ for any vector $u$, can we calculate
$\matr{A}^{\frac{1}{2}}v$ for a given vector $v$?

The answer to this question is yes. As illustrated by \cite{Allen-ea-SQRT},
we can define the following initial value problem:
\begin{equation} \label{eq::difeq}
  \begin{cases}
    x'(t) = -\frac{1}{2}\left(t \matr{A} + (1 - t) \matr{I}\right)^{-1}
      \left(\matr{I} - \matr{A}\right)x(t) \\
    x(0) = v
  \end{cases}
\end{equation}
When the norm of $\matr{A}$ is small enough (which can be trivially rescaled), 
one can show that the ordinary
differential equation \eqref{eq::difeq} has the unique solution
\begin{equation*}
  x(t) = \left(t \matr{A} + (1-t) \matr{I}\right)^{\frac{1}{2}} v
\end{equation*}
This solution has the crucial property that $x(1) = \matr{A}^{\frac{1}{2}}v$.
Therefore, to calculate $\matr{A}^{\frac{1}{2}}v$ we can initialize $x(0) = v$,
and plug in equation \eqref{eq::difeq} to an ODE solver such as the different
Runge Kutta methods. The second core property of this formulation is that
in order to do the derivative evaluations required to solve the ODE, we only
need to multiply by $(\matr{I} - \matr{A})$ and solve systems by 
$\left(t \matr{A} + (1 - t) \matr{I}\right)$, both of which can be done
only with products of the form $\matr{A}u$ without storing any matrix, 
using conjugate gradients for the linear systems.

In order to solve our problem of approximating SFN in a Hessian-free way, we
could calculate $\abs{\matr{H}}v$ using $\matr{A}u := \matr{H}^2u$
in the previous
method and do conjugate gradients to solve the system in \eqref{eq::SFN}.
However, this would require solving
an ODE for every iteration of conjugate gradients, which would be quite
expensive. Therefore, we propose to calculate update \eqref{eq::SFN} in a
two-step manner. First, we multiply by $\abs{\matr{H}}$ and then we divide
by $\matr{H}^2 = \abs{\matr{H}}^2$:
\begin{align*}
  y &\leftarrow \abs{\matr{H}(\theta_k)} \nabla f(\theta_k) \\
  \Delta \theta_k &\leftarrow - \alpha (\matr{H}(\theta_k)^2)^{-1} y
\end{align*}
Combining this approach with our approximation schemes, we derive our
final algorithm, that we deem saddle-free Hessian-free optimization:
\begin{align*}
  y & \leftarrow \text{ODE-solve}\left(\text{Equation \eqref{eq::difeq}, 
    $\matr{A}u := \matr{H}(\theta_k)^2u$, 
    $v = \nabla f(\theta_k)$}
    \right) \\
  \Delta \theta_k & \leftarrow \text{CG-Solve}\left(
    \matr{H}(\theta_k)^2, -\alpha y \right)
\end{align*}
If $l$ is the number of Runge Kutta steps we take to solve the ODE 
\eqref{eq::difeq}, and $k$ is the number of CG iterations used to solve
the linear systems, 
then the overall cost of the algorithm is $\mathcal{O}(mlk)$.
Since $l$ is close to 20 in the successful experiments done 
by \cite{Allen-ea-SQRT} on
random matrices (independently of $m$), and $k$ is no larger than 250 in 
typical Hessian-free implementations, this
is substantially lower than the $\mathcal{O}(m^3)$ cost of saddle-free Newton.
Furthermore, one critical advantage is that the memory cost of the algorithm
is $\mathcal{O}(m)$, since at no moment it is required to store more than a
small constant number of vectors of size $m$.

\section{Conclusion and Future Work}
We presented a new algorithm called saddle-free Hessian-free optimization. 
This algorithm
provides a first step towards merging the benefits of computationally
efficient Hessian-free approaches and methods like saddle-free Newton,
which are specially designed for nonconvex objectives.

Further work will be focused on taking these ideas to real world applications,
and adding more speed and stability improvements to the core algorithm,
such as the preconditioners of \cite{Martens-HF, Martens-ea-CP} and damping
with Levenberg-Marquardt \citep{Levenberg1944, marquardt:1963}
style heuristics.

\section*{Acknowledgements}
The author would like to specially thank Marco Vanotti for his constant
and inconditional support.
The author would also like to thank Yoshua Bengio, Yann
Dauphin and Harm de Vries for very fruitful discussions.

\bibliography{sfhf_workshop}
\bibliographystyle{abbrv}

\end{document}